\documentclass[10pt]{article}
\pdfoutput=1 
\usepackage{amsthm,amssymb, epsfig, epstopdf, tikz, tikz-cd, tcolorbox, subcaption, setspace, graphicx, hyperref, enumitem, mathrsfs, bbm, mathtools,color,stmaryrd, comment, fancyheadings, mathpazo, euler, subcaption,cleveref}
\theoremstyle{definition}
\usepackage[margin=.9in]{geometry}
\setlength{\baselineskip}{10.0pt} 
\setlength{\parskip}{5pt plus 2pt}
\setlength{\parindent}{15pt}

\newtheorem{remark}{Remark}[section]
\newtheorem{notation}{Notation}[section]

\newtheorem{theorem}{Theorem}
\newtheorem{definition}{Definition}[section]
\newtheorem{prop}{Proposition}[section]
\newtheorem{lemma}{Lemma}[section]

\newtheorem{fact}{Fact}[section]
\newtheorem{example}{Example}[section]

\pagestyle{fancy}
\fancyhead{}
\fancyhead[c]{Open Maps Preserve Stability}
\fancyhead[r]{\thepage}
\fancyfoot{}
\begin{document}
\title{Open Maps Preserve Stability} 
\author{James Schmidt\footnote{Department of Applied Mathematics and Statistics,  aschmi40@jhu.edu}\\
\small Johns Hopkins University}
\date{\today}
\maketitle
\newcommand{\nat}[6][large]{%
  \begin{tikzcd}[ampersand replacement = \&, column sep=#1]
    #2\ar[bend left=40,""{name=U}]{r}{#4}\ar[bend right=40,',""{name=D}]{r}{#5}\& #3
          \ar[shorten <=10pt,shorten >=10pt,Rightarrow,from=U,to=D]{d}{~#6}
    \end{tikzcd}
}
\newcommand{\invamalg}{\mathbin{\text{\rotatebox[origin=c]{180}{$\amalg$}}}}

\newcommand{\dCrl}[0]{\mathfrak{dCrl}}
\newcommand{\ytil}{\tilde{y}}
\newcommand{\defeq}{\vcentcolon=}
\newcommand{\dee}{\partial}
\newcommand{\lb}{\{}
\newcommand{\rb}{\}}
\newcommand{\R}{\mathbb{R}}
\newcommand{\C}{\mathbb{C}}
\newcommand{\Q}{\mathbb{Q}}
\newcommand{\N}{\mathbb{N}}
\newcommand{\el}{\mathcal{L}}
\newcommand{\pdiv}[2]{\frac{\partial{#1}}{\partial{#2}}}
\newcommand{\discatp}{\displaystyle\bigsqcap}
\newcommand{\discats}{\displaystyle\bigsqcup}

\newcommand{\uZ}{\underline{\mathbb{Z}}}
\newcommand{\uF}[1]{\underline{\mathbb{F}}}
\newcommand{\one}{\mathbb{1}}
\newcommand{\two}{\mathbb{2}}

\newcommand{\dis}{\displaystyle}
\newcommand{\disp}{\displaystyle\prod}
\newcommand{\disu}{\displaystyle\bigcup}
\newcommand{\disi}{\displaystyle\bigcap}
\newcommand{\diss}{\displaystyle\sum}
\newcommand{\disg}{\displaystyle\int}
\newcommand{\disl}{\displaystyle\lim}
\newcommand{\dislim}{\displaystyle\lim}
\newcommand{\disliminf}{\displaystyle\liminf}
\newcommand{\dislimsup}{\displaystyle\limsup}
\newcommand{\disbop}{\displaystyle\bigotimes}
\newcommand{\disbos}{\displaystyle\bigoplus}
\newcommand{\dissup}{\displaystyle\sup}
\newcommand{\disinf}{\displaystyle\inf}
\newcommand{\dismax}{\displaystyle\max}
\newcommand{\dismin}{\displaystyle\min}
\newcommand{\dirlim}[1]{\displaystyle\varinjlim_{#1}}
\newcommand{\indlim}[1]{\displaystyle\varprojlim_{#1}}
\newcommand{\discatlim}{\indlim}
\newcommand{\discatcolim}{\dirlim}
\newcommand{\catcolim}{\mbox{colim}}
\newcommand{\catlim}{\mbox{lim}}

\newcommand{\colgray}[1]{\color{gray}{#1}\color{black}}

\newcommand{\sfM}{\sF{M}}

\newcommand{\forget}[2]{\Ub^{#1}_{#2}}

\newcommand\righttwoarrow{%
        \mathrel{\vcenter{\mathsurround0pt
                \ialign{##\crcr
                        \noalign{\nointerlineskip}$\rightarrow$\crcr
                        \noalign{\nointerlineskip}$\rightarrow$\crcr
                }%
        }}%
}

\newcommand{\Z}{\mathbb{Z}}
\newcommand{\Ab}[0]{\mathbb{A}}
\newcommand{\Bb}[0]{\mathbb{B}}
\newcommand{\Cb}[0]{\mathbb{C}}
\newcommand{\Db}[0]{\mathbb{D}}
\newcommand{\Eb}[0]{\mathbb{E}}
\newcommand{\Fb}[0]{\mathbb{F}}
\newcommand{\Gb}[0]{\mathbb{G}}
\newcommand{\Hb}[0]{\mathbb{H}}
\newcommand{\Ib}[0]{\mathbb{I}}
\newcommand{\Jb}[0]{\mathbb{J}}
\newcommand{\Kb}[0]{\mathbb{K}}
\newcommand{\Lb}[0]{\mathbb{L}}
\newcommand{\Mb}[0]{\mathbb{M}}
\newcommand{\Nb}[0]{\mathbb{N}}
\newcommand{\Ob}[0]{\mathbb{O}}
\newcommand{\Pb}[0]{\mathbb{P}}
\newcommand{\Qb}[0]{\mathbb{Q}}
\newcommand{\Rb}[0]{\mathbb{R}}
\newcommand{\Sb}[0]{\mathbb{S}}
\newcommand{\Tb}[0]{\mathbb{T}}
\newcommand{\Ub}[0]{\mathbb{U}}
\newcommand{\Vb}[0]{\mathbb{V}}
\newcommand{\Wb}[0]{\mathbb{W}}
\newcommand{\Xb}[0]{\mathbb{X}}
\newcommand{\Yb}[0]{\mathcal{Y}}
\newcommand{\Zb}[0]{\mathbb{Z}}

\newcommand{\sF}[1]{\mathsf{#1}}

\newcommand{\sC}[1]{\mathscr{#1}}

\newcommand{\mC}[1]{\mathcal{#1}}

\newcommand{\mB}[1]{\mathbb{#1}}

\newcommand{\mF}[1]{\mathfrak{#1}}

\newcommand{\Bc}[0]{\mathcal{B}}
\newcommand{\Cc}[0]{\mathcal{C}}
\newcommand{\Dc}[0]{\mathcal{D}}
\newcommand{\Ec}[0]{\mathcal{E}}
\newcommand{\Fc}[0]{\mathcal{F}}
\newcommand{\Gc}[0]{\mathcal{G}}
\newcommand{\Hc}[0]{\mathcal{H}}
\newcommand{\Ic}[0]{\mathcal{I}}
\newcommand{\Jc}[0]{\mathcal{J}}
\newcommand{\Kc}[0]{\mathcal{K}}
\newcommand{\Lc}[0]{\mathcal{L}}
\newcommand{\Mc}[0]{\mathcal{M}}
\newcommand{\Nc}[0]{\mathcal{N}}
\newcommand{\Oc}[0]{\mathcal{O}}
\newcommand{\Pc}[0]{\mathcal{P}}
\newcommand{\Qc}[0]{\mathcal{Q}}
\newcommand{\Rc}[0]{\mathcal{R}}
\newcommand{\Sc}[0]{\mathcal{S}}
\newcommand{\Tc}[0]{\mathcal{T}}
\newcommand{\Uc}[0]{\mathcal{U}}
\newcommand{\Vc}[0]{\mathcal{V}}
\newcommand{\Wc}[0]{\mathcal{W}}
\newcommand{\Xc}[0]{\mathcal{X}}
\newcommand{\Yc}[0]{\mathcal{Y}}
\newcommand{\Zc}[0]{\mathcal{Z}}

\newcommand{\aca}[0]{\mathcal{a}}
\newcommand{\bca}[0]{\mathcal{b}}
\newcommand{\cca}[0]{\mathcal{c}}
\newcommand{\dca}[0]{\mathcal{d}}
\newcommand{\eca}[0]{\mathcal{e}}
\newcommand{\fca}[0]{\mathcal{f}}
\newcommand{\gca}[0]{\mathcal{g}}
\newcommand{\hca}[0]{\mathcal{h}}
\newcommand{\ica}[0]{\mathcal{i}}
\newcommand{\jca}[0]{\mathcal{j}}
\newcommand{\kca}[0]{\mathcal{k}}
\newcommand{\lca}[0]{\mathcal{l}}
\newcommand{\mca}[0]{\mathcal{m}}
\newcommand{\nca}[0]{\mathcal{n}}
\newcommand{\oca}[0]{\mathcal{o}}
\newcommand{\pca}[0]{\mathcal{p}}
\newcommand{\qca}[0]{\mathcal{q}}
\newcommand{\rca}[0]{\mathcal{r}}
\newcommand{\sca}[0]{\mathcal{s}}
\newcommand{\tca}[0]{\mathcal{t}}
\newcommand{\uca}[0]{\mathcal{u}}
\newcommand{\vca}[0]{\mathcal{v}}
\newcommand{\wca}[0]{\mathcal{w}}
\newcommand{\xca}[0]{\mathcal{x}}
\newcommand{\yca}[0]{\mathcal{y}}
\newcommand{\zca}[0]{\mathcal{z}}

\newcommand{\Af}[0]{\mathfrak{A}}
\newcommand{\Bf}[0]{\mathfrak{B}}
\newcommand{\Cf}[0]{\mathfrak{C}}
\newcommand{\Df}[0]{\mathfrak{D}}
\newcommand{\Ef}[0]{\mathfrak{E}}
\newcommand{\Ff}[0]{\mathfrak{F}}
\newcommand{\Gf}[0]{\mathfrak{G}}
\newcommand{\Hf}[0]{\mathfrak{H}}
\newcommand{\If}[0]{\mathfrak{I}}
\newcommand{\Jf}[0]{\mathfrak{J}}
\newcommand{\Kf}[0]{\mathfrak{K}}
\newcommand{\Lf}[0]{\mathfrak{L}}
\newcommand{\Mf}[0]{\mathfrak{M}}
\newcommand{\Nf}[0]{\mathfrak{N}}
\newcommand{\Of}[0]{\mathfrak{O}}
\newcommand{\Pf}[0]{\mathfrak{P}}
\newcommand{\Qf}[0]{\mathfrak{Q}}
\newcommand{\Rf}[0]{\mathfrak{R}}
\newcommand{\Sf}[0]{\mathfrak{S}}
\newcommand{\Tf}[0]{\mathfrak{T}}
\newcommand{\Uf}[0]{\mathfrak{U}}
\newcommand{\Vf}[0]{\mathfrak{V}}
\newcommand{\Wf}[0]{\mathfrak{W}}
\newcommand{\Xf}[0]{\mathfrak{X}}
\newcommand{\Yf}[0]{\mathfrak{Y}}
\newcommand{\Zf}[0]{\mathfrak{Z}}

\newcommand{\af}[0]{\mathfrak{a}}
\newcommand{\bff}[0]{\mathfrak{b}}
\newcommand{\cf}[0]{\mathfrak{c}}
\newcommand{\dff}[0]{\mathfrak{d}}
\newcommand{\ef}[0]{\mathfrak{e}}
\newcommand{\ff}[0]{\mathfrak{f}}
\newcommand{\gf}[0]{\mathfrak{g}}
\newcommand{\hf}[0]{\mathfrak{h}}
\newcommand{\ifrak}{\mathfrak{i}}
\newcommand{\jf}[0]{\mathfrak{j}}
\newcommand{\kf}[0]{\mathfrak{k}}
\newcommand{\lf}[0]{\mathfrak{l}}
\newcommand{\mf}[0]{\mathfrak{m}}
\newcommand{\nf}[0]{\mathfrak{n}}
\newcommand{\of}[0]{\mathfrak{o}}
\newcommand{\pf}[0]{\mathfrak{p}}
\newcommand{\qf}[0]{\mathfrak{q}}
\newcommand{\rf}[0]{\mathfrak{r}}
\renewcommand{\sf}[0]{\mathfrak{s}}
\newcommand{\tf}[0]{\mathfrak{t}}
\newcommand{\uf}[0]{\mathfrak{u}}
\newcommand{\vf}[0]{\mathfrak{v}}
\newcommand{\wf}[0]{\mathfrak{w}}
\newcommand{\xf}[0]{\mathfrak{x}}
\newcommand{\yf}[0]{\mathfrak{y}}
\newcommand{\zf}[0]{\mathfrak{z}}
\newcommand{\cdX}{\mathfrak{cdX}}
\newcommand{\cdCrl}{\mathfrak{cdCrl}}

\newcommand{\scA}[0]{\mathscr{A}}
\newcommand{\scB}[0]{\mathscr{B}}
\newcommand{\scC}[0]{\mathscr{C}}
\newcommand{\scD}[0]{\mathscr{D}}
\newcommand{\scE}[0]{\mathscr{E}}
\newcommand{\scF}[0]{\mathscr{F}}
\newcommand{\scG}[0]{\mathscr{G}}
\newcommand{\scH}[0]{\mathscr{H}}
\newcommand{\scI}[0]{\mathscr{I}}
\newcommand{\scJ}[0]{\mathscr{J}}
\newcommand{\scK}[0]{\mathscr{K}}
\newcommand{\scL}[0]{\mathscr{L}}
\newcommand{\scM}[0]{\mathscr{M}}
\newcommand{\scN}[0]{\mathscr{N}}
\newcommand{\scO}[0]{\mathscr{O}}
\newcommand{\scP}[0]{\mathscr{P}}
\newcommand{\scQ}[0]{\mathscr{Q}}
\newcommand{\scR}[0]{\mathscr{R}}
\newcommand{\scS}[0]{\mathscr{S}}
\newcommand{\scT}[0]{\mathscr{T}}
\newcommand{\scU}[0]{\mathscr{U}}
\newcommand{\scV}[0]{\mathscr{V}}
\newcommand{\scW}[0]{\mathscr{W}}
\newcommand{\scX}[0]{\mathscr{X}}
\newcommand{\scY}[0]{\mathscr{Y}}
\newcommand{\scZ}[0]{\mathscr{Z}}

\newcommand{\fA}[0]{\mathsf{A}}
\newcommand{\fB}[0]{\mathsf{B}}
\newcommand{\fC}[0]{\mathsf{C}}
\newcommand{\fD}[0]{\mathsf{D}}
\newcommand{\fE}[0]{\mathsf{E}}
\newcommand{\fG}[0]{\mathsf{G}}
\newcommand{\fH}[0]{\mathsf{H}}
\newcommand{\fI}[0]{\mathsf{I}}
\newcommand{\fJ}[0]{\mathsf{J}}
\newcommand{\fK}[0]{\mathsf{K}}
\newcommand{\fL}[0]{\mathsf{L}}
\newcommand{\fM}[0]{\mathsf{M}}
\newcommand{\fN}[0]{\mathsf{N}}
\newcommand{\fO}[0]{\mathsf{O}}
\newcommand{\fP}[0]{\mathsf{P}}
\newcommand{\fQ}[0]{\mathsf{Q}}
\newcommand{\fR}[0]{\mathsf{R}}
\newcommand{\fS}[0]{\mathsf{S}}
\newcommand{\fT}[0]{\mathsf{T}}
\newcommand{\fU}[0]{\mathsf{U}}
\newcommand{\fV}[0]{\mathsf{V}}
\newcommand{\fW}[0]{\mathsf{W}}
\newcommand{\fX}[0]{\mathsf{X}}
\newcommand{\fY}[0]{\mathsf{Y}}
\newcommand{\fZ}[0]{\mathsf{Z}}

\newcommand{\fa }[0]{\mathsf{a}}
\newcommand{\fb }[0]{\mathsf{b}}
\newcommand{\fc }[0]{\mathsf{c}}
\newcommand{\fd}[0]{\mathsf{d}}
\newcommand{\fe}[0]{\mathsf{e}}
\newcommand{\fg}[0]{\mathsf{g}}
\newcommand{\fh}[0]{\mathsf{h}}
\newcommand{\fj}[0]{\mathsf{j}}
\newcommand{\fk}[0]{\mathsf{k}}
\newcommand{\fl }[0]{\mathsf{l}}
\newcommand{\fm }[0]{\mathsf{m}}
\newcommand{\fn }[0]{\mathsf{n}}
\newcommand{\fo }[0]{\mathsf{o}}
\newcommand{\fp}[0]{\mathsf{p}}
\newcommand{\fq}[0]{\mathsf{q}}
\newcommand{\fr}[0]{\mathsf{r}}
\newcommand{\fs}[0]{\mathsf{s}}
\newcommand{\ft }[0]{\mathsf{t}}
\newcommand{\fu }[0]{\mathsf{u}}
\newcommand{\fv }[0]{\mathsf{v}}
\newcommand{\fw}[0]{\mathsf{w}}
\newcommand{\fx}[0]{\mathsf{x}}
\newcommand{\fy}[0]{\mathsf{y}}
\newcommand{\fz}[0]{\mathsf{z}}

\newcommand{\dA}[0]{\dot{A}}
\newcommand{\dB}[0]{\dot{B}}
\newcommand{\dC}[0]{\dot{C}}
\newcommand{\dD}[0]{\dot{D}}
\newcommand{\dE}[0]{\dot{E}}
\newcommand{\dF}[0]{\dot{F}}
\newcommand{\dG}[0]{\dot{G}}
\newcommand{\dH}[0]{\dot{H}}
\newcommand{\dI}[0]{\dot{I}}
\newcommand{\dJ}[0]{\dot{J}}
\newcommand{\dK}[0]{\dot{K}}
\newcommand{\dL}[0]{\dot{L}}
\newcommand{\dM}[0]{\dot{M}}
\newcommand{\dN}[0]{\dot{N}}
\newcommand{\dO}[0]{\dot{O}}
\newcommand{\dP}[0]{\dot{P}}
\newcommand{\dQ}[0]{\dot{Q}}
\newcommand{\dR}[0]{\dot{R}}
\newcommand{\dS}[0]{\dot{S}}
\newcommand{\dT}[0]{\dot{T}}
\newcommand{\dU}[0]{\dot{U}}
\newcommand{\dV}[0]{\dot{V}}
\newcommand{\dW}[0]{\dot{W}}
\newcommand{\dX}[0]{\dot{X}}
\newcommand{\dY}[0]{\dot{Y}}
\newcommand{\dZ}[0]{\dot{Z}}

\newcommand{\da}[0]{\dot{a}}
\newcommand{\db}[0]{\dot{b}}
\newcommand{\dc}[0]{\dot{c}}
\newcommand{\dd}[0]{\dot{d}}
\newcommand{\de}[0]{\dot{e}}
\newcommand{\df}[0]{\dot{f}}
\newcommand{\dg}[0]{\dot{g}}
\renewcommand{\dh}[0]{\dot{h}}
\newcommand{\di}[0]{\dot{i}}
\renewcommand{\dj}[0]{\dot{j}}
\newcommand{\dk}[0]{\dot{k}}
\newcommand{\dl}[0]{\dot{l}}
\newcommand{\dm}[0]{\dot{m}}
\newcommand{\dn}[0]{\dot{n}}
\newcommand{\dq}[0]{\dot{q}}
\newcommand{\dr}[0]{\dot{r}}
\newcommand{\ds}[0]{\dot{s}}
\newcommand{\dt}[0]{\dot{t}}
\newcommand{\du}[0]{\dot{u}}
\newcommand{\dv}[0]{\dot{v}}
\newcommand{\dw}[0]{\dot{w}}
\newcommand{\dx}[0]{\dot{x}}
\newcommand{\dy}[0]{\dot{y}}
\newcommand{\dz}[0]{\dot{z}}

\newcommand{\oA}[0]{\overline{A}}
\newcommand{\oB}[0]{\overline{B}}
\newcommand{\oC}[0]{\overline{C}}
\newcommand{\oD}[0]{\overline{D}}
\newcommand{\oE}[0]{\overline{E}}
\newcommand{\oF}[0]{\overline{F}}
\newcommand{\oG}[0]{\overline{G}}
\newcommand{\oH}[0]{\overline{H}}
\newcommand{\oI}[0]{\overline{I}}
\newcommand{\oJ}[0]{\overline{J}}
\newcommand{\oK}[0]{\overline{K}}
\newcommand{\oL}[0]{\overline{L}}
\newcommand{\oM}[0]{\overline{M}}
\newcommand{\oN}[0]{\overline{N}}
\newcommand{\oO}[0]{\overline{O}}
\newcommand{\oP}[0]{\overline{P}}
\newcommand{\oQ}[0]{\overline{Q}}
\newcommand{\oR}[0]{\overline{R}}
\newcommand{\oS}[0]{\overline{S}}
\newcommand{\oT}[0]{\overline{T}}
\newcommand{\oU}[0]{\overline{U}}
\newcommand{\oV}[0]{\overline{V}}
\newcommand{\oW}[0]{\overline{W}}
\newcommand{\oX}[0]{\overline{X}}
\newcommand{\oY}[0]{\overline{Y}}
\newcommand{\oZ}[0]{\overline{Z}}

\newcommand{\oa}[0]{\overline{a}}
\newcommand{\ob}[0]{\overline{b}}
\newcommand{\oc}[0]{\overline{c}}
\newcommand{\od}[0]{\overline{d}}
\renewcommand{\oe}[0]{\overline{e}}
\newcommand{\og}[0]{\overline{g}}
\newcommand{\oh}[0]{\overline{h}}
\newcommand{\oi}[0]{\overline{i}}
\newcommand{\oj}[0]{\overline{j}}
\newcommand{\ok}[0]{\overline{k}}
\newcommand{\ol}[0]{\overline{l}}
\newcommand{\om}[0]{\overline{m}}
\newcommand{\on}[0]{\overline{n}}
\newcommand{\oo}[0]{\overline{o}}
\newcommand{\op}[0]{\overline{p}}
\newcommand{\oq}[0]{\overline{q}}
\newcommand{\os}[0]{\overline{s}}
\newcommand{\ot}[0]{\overline{t}}
\newcommand{\ou}[0]{\overline{u}}
\newcommand{\ov}[0]{\overline{v}}
\newcommand{\ow}[0]{\overline{w}}
\newcommand{\ox}[0]{\overline{x}}
\newcommand{\oy}[0]{\overline{y}}
\newcommand{\oz}[0]{\overline{z}}

\renewcommand{\a}{\alpha}
\renewcommand{\b}{\beta}
\renewcommand{\d}{\delta}
\newcommand{\e}{\varepsilon}
\newcommand{\f}{\phi}
\newcommand{\g}{\gamma}
\newcommand{\h}{\eta}
\renewcommand{\i}{\iota}
\renewcommand{\k}{\kappa}
\renewcommand{\l}{\lambda}
\newcommand{\m}{\mu}
\newcommand{\n}{\nu}
\newcommand{\p}{\pi}
\newcommand{\ph}{\varphi}
\newcommand{\ps}{\psi}
\newcommand{\q}{\xi}
\renewcommand{\r}{\rho}
\newcommand{\s}{\sigma}
\renewcommand{\t}{\tau}
\renewcommand{\v}{\upsilon}
\newcommand{\x}{\chi}
\newcommand{\z}{\zeta}
\newcommand{\G}{\Gamma}

\newcommand{\aarb}[0]{\<a>}
\newcommand{\barb}[0]{\<b>}
\newcommand{\carb}[0]{\<c>}
\newcommand{\darb}[0]{\<d>}
\newcommand{\earb}[0]{\<e>}
\newcommand{\farb}[0]{\<f>}
\newcommand{\garb}[0]{\<g>}
\newcommand{\harb}[0]{\<h>}
\newcommand{\iarb}[0]{\<i>}
\newcommand{\jarb}[0]{\<j>}
\newcommand{\karb}[0]{\<k>}
\newcommand{\larb}[0]{\<l>}
\newcommand{\marb}[0]{\<m>}
\newcommand{\narb}[0]{\<n>}
\newcommand{\oarb}[0]{\<o>}
\newcommand{\parb}[0]{\<p>}
\newcommand{\qarb}[0]{\<q>}
\newcommand{\rarb}[0]{\<r>}
\newcommand{\sarb}[0]{\<s>}
\newcommand{\tarb}[0]{\<t>}
\newcommand{\uarb}[0]{\<u>}
\newcommand{\varb}[0]{\<v>}
\newcommand{\warb}[0]{\<w>}
\newcommand{\xarb}[0]{\<x>}
\newcommand{\yarb}[0]{\<y>}
\newcommand{\zarb}[0]{\<z>}

\newcommand{\hA}[0]{\hat{A}}
\newcommand{\hB}[0]{\hat{B}}
\newcommand{\hC}[0]{\hat{C}}
\newcommand{\hD}[0]{\hat{D}}
\newcommand{\hE}[0]{\hat{E}}
\newcommand{\hF}[0]{\hat{F}}
\newcommand{\hG}[0]{\hat{G}}
\newcommand{\hH}[0]{\hat{H}}
\newcommand{\hI}[0]{\hat{I}}
\newcommand{\hJ}[0]{\hat{J}}
\newcommand{\hK}[0]{\hat{K}}
\newcommand{\hL}[0]{\hat{L}}
\newcommand{\hM}[0]{\hat{M}}
\newcommand{\hN}[0]{\hat{N}}
\newcommand{\hO}[0]{\hat{O}}
\newcommand{\hP}[0]{\hat{P}}
\newcommand{\hQ}[0]{\hat{Q}}
\newcommand{\hR}[0]{\hat{R}}
\newcommand{\hS}[0]{\hat{S}}
\newcommand{\hT}[0]{\hat{T}}
\newcommand{\hU}[0]{\hat{U}}
\newcommand{\hV}[0]{\hat{V}}
\newcommand{\hW}[0]{\hat{W}}
\newcommand{\hX}[0]{\hat{X}}
\newcommand{\hY}[0]{\hat{Y}}
\newcommand{\hZ}[0]{\hat{Z}}

\newcommand{\ha}[0]{\hat{a}}
\newcommand{\hb}[0]{\hat{b}}
\newcommand{\hc}[0]{\hat{c}}
\newcommand{\hd}[0]{\hat{d}}
\newcommand{\he}[0]{\hat{e}}
\newcommand{\hg}[0]{\hat{g}}
\newcommand{\hh}[0]{\hat{h}}
\newcommand{\hi}[0]{\hat{i}}
\newcommand{\hj}[0]{\hat{j}}
\newcommand{\hk}[0]{\hat{k}}
\newcommand{\hl}[0]{\hat{l}}
\newcommand{\hm}[0]{\hat{m}}
\newcommand{\hn}[0]{\hat{n}}
\newcommand{\ho}[0]{\hat{o}}
\newcommand{\hp}[0]{\hat{p}}
\newcommand{\hq}[0]{\hat{q}}
\newcommand{\hr}[0]{\hat{r}}
\newcommand{\hs}[0]{\hat{s}}
\newcommand{\hu}[0]{\hat{u}}
\newcommand{\hv}[0]{\hat{v}}
\newcommand{\hw}[0]{\hat{w}}
\newcommand{\hx}[0]{\hat{x}}
\newcommand{\hy}[0]{\hat{y}}
\newcommand{\hz}[0]{\hat{z}}

\newcommand{\hyph}[2]{\Fc_{#1}:#2\rightarrow \sF{RelMan^c}}
\newcommand{\repsys}{(\Fc_{N^\scT}:\N\rightarrow\sF{RelMan^c},\frac{d}{dt})}
\newcommand{\truerep}[1]{\left(\Fc_{N^\scT}{#1}:\N\rightarrow\sF{RelMan^c},\frac{D}{Dt}\right)}

\abstract{Stability is a fundamental notion in dynamical systems and control theory that, traditionally understood,  describes asymptotic behavior of solutions around an equilibrium point.  This notion  may be characterized abstractly as continuity of a map associating to each point in a state-space the corresponding integral curve with specified initial condition. Interpreting stability as such permits a natural perspective of arbitrary trajectories as stable, irrespective of whether they are stationary or even bounded, so long as trajectories starting nearby stay nearby for all time. 

While methods exist for recognizing stability of equilibria points, such as Lyapunov's first and second methods, such rely on the \textit{state's} local property, which may be readily computed or evaluated because solutions starting at equilibria go nowhere. Such methods do not obviously extend for non-stationary stable trajectories. After introducing a notion of stability which makes sense for trajectories generally, we give examples confirming intuition and then present a method for using knowledge of stability of one system to guarantee stability of another, so long as there is an open map of dynamical systems from the known stable system.  We thus define maps of dynamical systems and prove that a class of open maps preserve stability.}


\section{Introduction}

Stability is a property of dynamical systems guaranteeing that behavior of  
trajectories starting near equilibria points remain nearby for the rest of time. Morally, this property is nice to inhere because solutions may be qualitatively understood locally: bumping an equilibria point may induce trajectory disturbances (wiggles), but the perturbation will not blow up.    In control theory, which  generally is in the business of designing vector fields inducing certain behavior in a resulting  trajectory subject to constraints (what the control system allows), often stability  is not merely a nice-to-have but a requirement. For example, one typically seeks to drive an error to zero. Once that goal is achieved the system  is ``at (or near) equilibrium,'' and it may be imperative to have confidence that slight movement off equilibrium---either due to uncertainty in measurement or external disturbances---will not wildly destroy the realization of negligible error. 

In addition to being stable, it is also important to be able to verify \textit{that} a system is stable. Given an explicit solution, one may usually confirm  stability  immediately and directly. Absent solution, there are methods for checking stability from the dynamics itself, such as Lyapunov's first and second methods (\cite{khalil}), alternatively known more descriptively as linearization about the point of interest, and construction of a dissipative-along-solutions energy function, respectively. Linearization is limited when eigenvalues have zero real part, and in many cases there is no constructive algorithm for generating a dissipative energy function. Moreover, these methods are restricted, as far as we know, to stability of an \textit{equilibrium point}, and do not obviously extend for non-stationary trajectories.


 We introduce a notion of stability which makes sense for arbitrary solutions of  dynamical systems. We will confine our attention to continuous-time dynamical system as manifold and vector field pair $(M,X)$, and we  recall the \textit{solution map} which sends a point $x_0\in M$ to the solution of $X$ passing through $x_0$ at time $0$. We then review the standard definition of Lyapunov stability and generalize its definition as continuity of the solution map.    While this abstraction is not new, its appearance and application are sparse in the dynamical systems literature.  We note an immediate upshot, for example,  that the extra generality allows for a natural  description of arbitrary and even unbounded trajectories as stable. In control theory, where one often cares about driving a system to some desired---not necessarily equilibrium---trajectory,  error (deviation of state from desired trajectory) may be and often is used as proxy state for the underlying system.  In this setting, what is sought is that the dynamical system representing error has solutions which go to zero, and moreover,  that zero is a stable equilibrium, thus guaranteeing that a control algorithm is robust with respect to uncertainty, noise, or disturbances.  In this vein, perhaps little seems to be gained from the extra generality. 

However, we are now able to consider stability in the context of \textit{maps} of systems.  It makes sense to speak of composition as preserving continuity, as long as we are careful about working in the right topology.  We discuss a topology in the space of maps of dynamical systems, and use this to prove \cref{theorem:pushStableToStable} which says that an open map between dynamical systems sends bounded stable points to stable  points.  We end with an example illustrating application of this result, by mapping a \textit{linear} system (whose stability properties are entirely known by eigenvalues of the matrix representing  its dynamics) to a nonlinear system. Though the nonlinear system can be explicitly solved for (in particular, simply by pushing the linear solution forward), and therefore stability determined through other means, linearization by itself fails to detect stability.  While continuity is a local concept,  local-in-a-topology-on-$M$ or -$TM$ is different than local-in-the-space-of-maps-of-systems, which explains why \cref{theorem:pushStableToStable} can answer stability questions which linearization of a  vector field cannot.

 In what follows, we loosely situate our formalism for a general notion of dynamical systems in category theoretic language and worldview, and in particular focus on the map-centric philosophy inherent in this framework. The Yoneda embedding justifies  investigating mathematical objects through their (collection of) maps, and we call to mind, in particular,  that existence and uniqueness for complete continuous-time systems can expressed alternatively as representability of a certain functor  through the Grothendieck construction. While we do not heavily rely on the category theory, we present the main result \cref{theorem:pushStableToStable} as a first step toward a workable category theory of dynamical systems. We believe that this result gives credence to the claim that \textit{maps} of dynamical systems may generate additional insight into dynamical systems themselves, in addition to analysis of systems in isolation. 

\section{Review of  Dynamical Systems and Stability}\label{sec:dysys}
We start with a geometric definition of dynamical system. In the discrete case, we think of iterates of maps. In continuous-time, we think of trajectories in space whose velocity satisfies a specified ordinary differential equation. In general, we would like to describe dynamics simply in terms of the ``governing dynamics.'' For what follows, we consider only continuous-time systems: 

\begin{definition}\label{def:ctDySys}
	We define a continuous-time dynamical system to be a pair $(M,X)$ where $M$ is a smooth manifold and $X\in \Xf(M)$ a smooth vector field on $M$. 
\end{definition}

We state at the outset that we take all smooth manifolds to be Hausdorff and second countable. One may suppose  $M=\R^n$ and that $\dx = X(x)$ denotes an ordinary differential equation; we do not require much more generality.

As we want to use information from one system to describe constraints on behavior in another, we present a notion of map \textit{between} two dynamical systems. To do so, we  relate how the map treats tangent vectors in the source. 

\begin{definition}\label{def:relatedVectorFields} Let $f:M\rightarrow N$ be a smooth map of manifolds.  We say that vector fields $X\in \Xf(M)$ and $Y\in \Xf(N)$ are $f$-\textit{related} if $Tf\circ X = Y\circ f$.	
\end{definition}

Apriori, there is no reason why the tangent vector defined at $f(x)$ by $Y$ should agree with the pushforward under $Tf$ of $X(x)$. This definition says that vector fields are map-related when this relation holds. Relatedness is the condition we need for  a notion of map of dynamical systems:

\begin{definition}\label{def:ctDySysMorphism}
	Let $(M,X)$ and $(N,Y)$ be two continuous-time dynamical systems.  A  \textit{map} (or \textit{morphism}) $(M,X)\xrightarrow{f}(N,Y)$ of systems is a smooth map $M\xrightarrow{f}N$ of manifolds such that $(X,Y)$ are $f$-related (\cref{def:relatedVectorFields}). 
\end{definition}

We will isolate a special class of maps of dynamical systems. Before identifying them as such, we recall the standard definition. 
\begin{definition}\label{def:solutionToDynamicalSystem}
Let $(M,X)$ be a continuous-time dynamical system.  A  \textit{solution} $\ph_{X}$ of $(M,X)$---also called an \textit{integral curve}---is a map $\ph_{X}:(-\e,\e)\rightarrow M$, for some $\e>0$, such that $\frac{d}{dt}\ph_{X}(t) = X(\ph_{X}(t))$ for all $t\in (-\e,\e)$. The value $\ph_X(0)=x_0$ at $t=0$ is called the \textit{initial condition}, and we may write $\ph_{X,x_0}$ to indicate  $x_0$ as the initial condition.  

A solution may have non-symmetric domain $(-\d,\e)$,  and we say that $\ph_X$ is maximal if its domain may not be extended, i.e.\ if there is no $(-\d',\e')\supsetneq (-\d,\e)$ for which $\psi_X:(-\d',\e')\rightarrow M$ is an integral curve. \end{definition}

\begin{remark}\label{remark:completeDySys}
	There are systems  whose maximal solution has domain  all of  $\R$.  Such solutions are said to be \textit{complete}. \end{remark}
	
	 Every dynamical system $(M,X)$ has  solutions.  Moreover, solutions are  unique. We recall and restate the central Existence and Uniqueness Theorem: 
	 
\begin{theorem}\label{theorem:E&U}
	Let $(M,X)$ be a dynamical system, and $x_0\in M$. Then there are $\e,\d>0$ for which a smooth map $\ph_{X,x_0}:(-\d,\e)\rightarrow M$ is unique maximal solution of $(M,X)$ with initial condition $x_0$.  Thus,  $\ph_{X,x_0}(0) = 0$ and $\frac{d}{dt} \ph_{X,x_0}(t) = X(\ph_{X,x_0}(t))$ for $t\in (-\d,\e)$.  Moreover, given curve $\g:(-\d',\e')\rightarrow M$ satisfying $\g(0)= x_0$ and $\frac{d}{dt}\g(t) = X(\g(t))$, then $(-\d',\e')\subseteq (-\d,\e)$ and $\g(t) = \ph_X(t)$ for $t\in (-\d',\e')$. 
\end{theorem}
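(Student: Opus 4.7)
The plan is to reduce the statement on manifolds to a local problem in Euclidean space and apply the classical Picard--Lindelöf argument, then patch local solutions via uniqueness to obtain a maximal interval.

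First, since $M$ is smooth, I would choose a chart $(U,\psi)$ around $x_0$ with $\psi(U)\subseteq \R^n$ open. In these coordinates the vector field $X|_U$ is represented by a smooth function $F:\psi(U)\to\R^n$, and integral curves of $X$ through $x_0$ correspond bijectively to curves $\g$ in $\psi(U)$ satisfying $\dot\g(t)=F(\g(t))$ with $\g(0)=\psi(x_0)$. Rewriting this as the integral equation
\[
\g(t) = \psi(x_0) + \int_0^t F(\g(s))\,ds,
\]
I would then set up the Picard operator $T$ on the Banach space $C([-\e,\e],\overline{B_r(\psi(x_0))})$ (with sup norm), where $r>0$ is chosen so the closed ball sits inside $\psi(U)$. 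Smoothness of $F$ yields a local Lipschitz constant $L$ on this ball, and a standard estimate shows $T$ is a contraction for $\e<\min(r/\|F\|_\infty,1/L)$. The Banach fixed point theorem then produces a unique continuous fixed point $\g$, which is the sought-after integral curve on a symmetric interval $(-\e,\e)$.

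Next I would handle regularity and uniqueness. For regularity, I would bootstrap: since $\g$ is continuous and $F$ is smooth, $F\circ \g$ is continuous, so the integral equation shows $\g\in C^1$; inductively $\g\in C^k$ for all $k$, hence smooth. For uniqueness within a fixed interval, I would deploy Grönwall's inequality: if $\g_1,\g_2$ are two solutions agreeing at $0$, then
\[
\|\g_1(t)-\g_2(t)\| \le L\int_0^t\|\g_1(s)-\g_2(s)\|\,ds,
\]
forcing $\g_1=\g_2$ on their common domain. Pulling back through $\psi$ returns the local existence/uniqueness on $M$.

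To obtain a maximal solution, I would consider the collection $\mathcal{S}$ of all smooth integral curves $\g:I_\g\to M$ through $x_0$ (with $0\in I_\g$ an open interval). By the local uniqueness result just established, any two members of $\mathcal{S}$ agree on the intersection of their domains (the set where they agree is open by local uniqueness, closed by continuity, and nonempty, hence all of the connected overlap). Setting $(-\d,\e)\defeq \bigcup_{\g\in\mathcal{S}} I_\g$ and defining $\ph_{X,x_0}(t)\defeq \g(t)$ for any $\g\in\mathcal{S}$ with $t\in I_\g$ yields a well-defined smooth integral curve whose domain is maximal by construction. The final clause of the theorem---that every integral curve through $x_0$ restricts $\ph_{X,x_0}$---is immediate from this construction.

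The main obstacle I anticipate is not the local Picard argument, which is routine once the chart is fixed, but the patching step: one must verify carefully that the local uniqueness really does propagate across chart transitions, since an integral curve may leave $U$ before the local existence interval expires, requiring a fresh chart around the exit point. Handling this rigorously amounts to showing that the set of times on which two solutions agree is clopen in their common connected domain, which uses Hausdorffness of $M$ for closedness and local uniqueness for openness.
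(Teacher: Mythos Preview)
Your argument is the standard Picard--Lindel\"of construction followed by the usual clopen patching to obtain a maximal interval, and it is correct as written; the only minor point is that in the contraction step you should phrase the domain as the complete metric space of continuous maps $[-\e,\e]\to \overline{B_r(\psi(x_0))}$ (which you do) and note explicitly that $T$ maps this space to itself under the bound $\e\le r/\|F\|_\infty$, but this is implicit in what you wrote.

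The paper, by contrast, does not prove this theorem at all: its entire proof is a reference to \cite[\S 14.3]{tu}. So your route is not so much \emph{different} from the paper's as it is a full argument where the paper gives none. What you have sketched is essentially the proof one finds in Tu or any comparable text---local existence via contraction mapping in a chart, smoothness by bootstrapping, local uniqueness via Gr\"onwall, and globalization by the connectedness (clopen) argument---so in spirit you are supplying exactly what the citation points to.
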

\begin{proof}
	See \cite[\S 14.3]{tu}.
\end{proof}

 An equivalent defintion of integral curves in  the vein of \cref{def:ctDySysMorphism}: 
\begin{definition}\label{def:integralCurveAsMap}
	Let $(M,X)$ be a continuous-time dynamical system.  A \textit{solution}  (or \textit{integral curve}) \textit{of system} $(M,X)$  is a map $\ph_{X,x_0}:((-\e,\e),\frac{d}{dt})\rightarrow (M,X)$ of dynamical systems from the dynamical system $((-\e,\e),\frac{d}{dt})$ with constant vector field $\frac{d}{dt}\in \Xf(\R)$ sending $t\mapsto 1\in T_t\R$. 
\end{definition}
Equivalence of \cref{def:solutionToDynamicalSystem} and \cref{def:integralCurveAsMap} follows from \cref{def:ctDySysMorphism}, since $X\circ \ph_{X,x_0} = T\ph_{X,x_0} \left(\frac{d}{dt}\right) = \frac{d}{dt} \ph_{X,x_0}$.  Recalling \cref{def:ctDySys},  \cref{def:ctDySysMorphism}, and \cref{remark:completeDySys}, we  package   \textit{complete} dynamical systems into a category. 
\begin{definition}\label{def:completeDySys}
	A \textit{complete (continuous-time)  dynamical system} $(M,X)$ is a pair where $M$ is a manifold and $X\in \Xf(M)$ is a smooth vector field on $M$ such that for each initial condition $x_0\in M$, there is a complete integral curve with initial condition $x_0$, i.e.\ a map $\ph_{X,x_0}:\R\rightarrow M$ satisfying $\frac{d}{dt} \ph_{X,x_0}(t) = X(\ph_{X,x_0}(t))$ for all $t\in \R$. 
	
	\textit{Morphisms (maps) of complete dynamical systems} are maps of dynamical systems (\cref{def:ctDySysMorphism}), namely  maps $(M,X)\xrightarrow{f} (N,Y)$, where both $(M,X)$ and $(N,Y)$ are complete.  We denote the collection of all maps between $(M,X)$ and $(N,Y)$ as $\sF{DySys}\big((M,X),(N,Y)\big)$. 
	 
	Collecting objects (complete dynamical systems) and morphisms (maps of complete dynamical systems) defines a category of \textit{complete dynamical systems}, which we denote as $\sF{DySys}$. \end{definition}

\begin{definition}\label{def:solutionMap}
	A complete dynamical system $(M,X)\in \sF{DySys}$ defines map $$\ph_{X,(\cdot)}:M\rightarrow\sF{DySys}\left(\big(\R,\frac{d}{dt}\big),\big(M,X\big)\right)$$  by sending $x_0\mapsto \ph_{X,x_0}(\cdot)$, the integral curve of $(M,X)$ with initial condition $\ph_{X,X_0}(0) = x_0$. We call $\ph_{X,(\cdot)}$ the \textit{solution  map} of $(M,X)$, and $\ph_{X,x_0}(\cdot)$---the solution map evaluated at point $x_0\in M$---the \textit{solution of} $(M,X)$ \textit{with initial condition} $x_0$. 
\end{definition}

When the system $(M,X)$ is fixed or obvious, we may drop  dependence of $\ph_X$ on vector field $X$ and simply write $\ph$. (We may still want to denote the initial condition in the subscript, however.)

\begin{definition}\label{def:equilibria}
	A point $x_e\in M$ is said to be an \textit{equilibrium point} of $(M,X)$ if $X(x_e) = 0\in T_{x_e}M$. 
\end{definition}
\begin{remark}
	The name comes from the fact that solutions starting at equilibria go nowhere: when $X(x_e)= 0$, then $\ph_{x_e}(t) \equiv x_e$ for all $t\in \R$. 
\end{remark}

\section{Lyapunov Stability}\label{sec:lyapunov}
We turn now to expositing Lyapunov stability theory, with moderately   abstract framing.  We suppose our manifolds are metric spaces, an assumption justified by the following elementary fact from geometry. 
\begin{fact}\label{fact:metrizability}
	Any second countable smooth manifold is metrizable (see, e.g., \cite[Corollary 13.30]{leeManifolds}). Metrizability means that the (a) metrizing metric induces the same topology as the original topology of the manifold. We will by default let $d_M:M\times M\rightarrow\R^{\geq 0}$ denote  such a metric on manifold $M$. 
\end{fact}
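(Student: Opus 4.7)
The plan is to invoke the Urysohn metrization theorem, which states that any second countable, regular, Hausdorff topological space admits a metric inducing its topology. Since the hypotheses of the fact already include Hausdorff and second countable (these are our blanket assumptions on manifolds), the only non-trivial piece is regularity.

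To establish regularity, I would first observe that any topological manifold is locally Euclidean, so each point $p\in M$ has an open neighborhood homeomorphic to an open subset of $\R^n$. This in turn implies that $M$ is locally compact: one may shrink a Euclidean chart around $p$ to a chart whose image is a precompact open Euclidean ball, giving a neighborhood of $p$ with compact closure in $M$. A standard point-set topology exercise then shows that every locally compact Hausdorff space is regular: given a point $p$ and a closed set $C$ with $p\notin C$, take a precompact open neighborhood $U$ of $p$ with $\overline{U}$ disjoint from $C$ (shrink $U$ if necessary by intersecting with the open complement of $C$), then $U$ and $M\setminus \overline{U}$ give the required disjoint open separation. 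Once regularity is in hand, Urysohn's theorem yields a metric $d_M$ whose induced topology coincides with the manifold topology.

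The main obstacle, such as it is, is really just being careful about the separation axioms: without Hausdorff the local compactness argument above does not give regularity, and without second countability Urysohn's theorem does not apply. Both hypotheses were explicitly built into \cref{def:ctDySys}, so nothing extra is required.

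As an alternative (and arguably cleaner) route, one can appeal to the Whitney embedding theorem: any second countable Hausdorff smooth $n$-manifold admits a smooth embedding $\iota:M\hookrightarrow \R^{2n+1}$, and one simply defines $d_M(p,q):=\|\iota(p)-\iota(q)\|$. Since $\iota$ is a topological embedding onto its image and the Euclidean metric metrizes the subspace topology on $\iota(M)$, this $d_M$ induces the original manifold topology. I would only use this second approach as a sanity check, since the Urysohn route is more elementary and avoids the machinery of Whitney embedding; the cited \cite[Corollary 13.30]{leeManifolds} in fact proceeds along the Urysohn line.
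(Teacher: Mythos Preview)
The paper does not actually prove this fact; it merely states it and cites Lee. Your Urysohn argument is correct and supplies what the paper omits, and your Whitney-embedding alternative is also valid. One small correction to your final parenthetical: the cited Corollary~13.30 in Lee's \emph{Introduction to Smooth Manifolds} sits in the chapter on Riemannian metrics and proceeds by equipping $M$ with a Riemannian metric (via partitions of unity) and showing that the associated Riemannian distance function metrizes the manifold topology---a third route distinct from both of yours, though of course all three are standard and yield the same conclusion.
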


Because we are working with \textit{maps} of dynamical systems, we set notation to carry this information: 

\begin{notation} We will let $\scM_X$ denote the set of integral curves of vector field $X$: \begin{equation}\label{eq:setOfIntegralCurves} \scM_{X}\defeq \sF{DySys}\left(\big(\R,\frac{d}{dt}\big),\big(M,X\big)\right).\end{equation} As $X\in \Xf(M)$, specifying the vector field alone is sufficient for disambiguation. Leaving choice of target system open, $\scM_{(\cdot)} = \sF{DySys}\left(\big(\R,\frac{d}{dt}\big),\cdot\right).$ 
\end{notation}

\begin{remark}
	Fixing the state-space $M$ and ranging over dynamics $X\in \Xf(M)$---as opposed to varying inputs and outputs---is a perspective running through  work in \cite{lermannetworks}, \cite{lermanopennetworks}, and  \cite{lermanSchmidt1}. This viewpoint is related to and influenced by Willems' \textit{behavioral approach} to dynamical systems \cite{willems}, which fixes dynamical system (i.e.\ the vector field) and considers its collection \eqref{eq:setOfIntegralCurves}  of integral curves, itself suggestive of a  category-theoretic morphism-centric philosophy. 
\end{remark}

\begin{lemma}\label{lemma:inducedMetric}
	A metric $d_M:M\times M\rightarrow\R^{\geq 0}$ induces a pseudo-metric $\d_X:\scM_X^2\rightarrow \R^{\geq0}\cup\{\infty\}$   on the space $\scM_X$ of integral curves of $(M,X)$. 
\end{lemma}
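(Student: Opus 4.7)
The plan is to define $\delta_X:\scM_X\times \scM_X\to \R^{\geq 0}\cup\{\infty\}$ by the supremum metric
\[
\delta_X(\varphi,\psi) \defeq \sup_{t\in\R} d_M\bigl(\varphi(t),\psi(t)\bigr),
\]
and then check the three pseudo-metric axioms pointwise. Since both $\varphi$ and $\psi$ are complete integral curves, they are defined for all $t\in\R$, so the pointwise distances $d_M(\varphi(t),\psi(t))$ form a well-defined family of non-negative reals; taking their supremum lands in $[0,\infty]$, which is why we must admit the value $\infty$ (integral curves of $X$ may well drift arbitrarily far apart as $t\to\pm\infty$).

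Next I would verify the axioms in order. Reflexivity ($\delta_X(\varphi,\varphi)=0$) is immediate because $d_M(\varphi(t),\varphi(t))=0$ for every $t$. Symmetry follows from symmetry of $d_M$ pointwise, before taking the supremum. For the triangle inequality, for each fixed $t\in\R$ the inequality
\[
d_M\bigl(\varphi(t),\chi(t)\bigr) \leq d_M\bigl(\varphi(t),\psi(t)\bigr) + d_M\bigl(\psi(t),\chi(t)\bigr) \leq \delta_X(\varphi,\psi) + \delta_X(\psi,\chi)
\]
holds, and taking the supremum over $t$ on the left yields $\delta_X(\varphi,\chi)\leq \delta_X(\varphi,\psi)+\delta_X(\psi,\chi)$, under the convention that $a+\infty=\infty$ for $a\in[0,\infty]$ (so the inequality is vacuous whenever either summand on the right is infinite).

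The reason the construction only yields a \emph{pseudo}-metric rather than a bona fide metric is that the axiom $\delta_X(\varphi,\psi)=0 \Rightarrow \varphi=\psi$ is not needed for the applications ahead, and allowing the value $\infty$ already departs from the standard metric framework; one should therefore think of $(\scM_X,\delta_X)$ as an extended pseudo-metric space. (In fact, since $\delta_X(\varphi,\psi)=0$ forces $\varphi(t)=\psi(t)$ for all $t$, the axiom does hold on the nose, so what we really construct is an extended metric; labelling it a pseudo-metric is harmless and gives flexibility for later generalizations such as quotienting by time-translation.)

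There is no serious obstacle to the argument — the only subtlety is bookkeeping the $\infty$ value, which is handled by the standard convention on $[0,\infty]$, and confirming that the codomain of $\delta_X$ is as claimed, both of which fall out of the observation that a pointwise supremum of an arbitrary family of non-negative reals always lies in $[0,\infty]$.
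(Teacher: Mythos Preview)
Your argument is correct and follows exactly the same template as the paper's---define $\delta_X$ as a pointwise supremum of $d_M$-distances, then verify symmetry, non-negativity, and the triangle inequality termwise before taking the sup. The paper also notes, as you do, that existence and uniqueness makes $\delta_X(\varphi_x,\varphi_y)=0$ force $x=y$, so the only ``pseudo'' feature is the possible value $\infty$.

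The one substantive discrepancy is the domain of the supremum: you take $\sup_{t\in\R}$, whereas the paper takes $\sup_{t\geq 0}$. Either choice yields a valid extended (pseudo-)metric, so your proof of the lemma as stated is fine. But the one-sided version is the one the paper actually needs downstream: Lyapunov stability (\cref{def:lyapunovStabilityContinuity}) and boundedness (\cref{def:boundedPoints}) are both forward-time notions, and the topology on $\scM_X$ built from $\delta_X$ must match. With your two-sided $\delta_X$, continuity of the solution map at $x_0$ would demand control of trajectories for all negative time as well, which is strictly stronger than classical Lyapunov stability and would break the equivalence stated in the remark following \cref{def:lyapunovStabilityContinuity}. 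So while nothing in your proof is wrong, you should replace $\sup_{t\in\R}$ by $\sup_{t\geq 0}$ to align with the intended object.
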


\begin{remark}
	One may note that our definition in the following proof extends without much injury to any pair of curves $\g,\eta\in \Cc(\R,M)$ in $M$. In that setting, however, it is possible that $\d_X(\g,\eta) = 0 $ while  $\g \neq \eta$. Even restricted to $\scM_X$,  it is still possible for $\d_X(\ph_x,\ph_y) = \infty$, which is why we still call $\d_X$ a pseudo-metric. Our use for this ``metric'' will be to define a topology on $\scM_X$ in \cref{remark:topologyOnIntegralCurves}. 
\end{remark}

\begin{proof}
	Let $\ph_x,\ph_z\in \scM_X$ be two integral curves of $(M,X)$, namely unique solutions of $X$ with initial conditions $\ph_x(0) = x$ and $\ph_z(0)=z$. We define $$\d_X(\ph_x,\ph_z)\defeq \dissup_{t\geq 0} d_M(\ph_x(t),\ph_z(t)).$$ 
	Then  $$\begin{array}{ll} \dissup_{t\geq 0 }  d_M(\ph_x(t),\ph_z(t)) 
	& \leq \dissup_{t\geq 0} \big(d_M(\ph_x(t),\ph_y(t)) + d_M(\ph_y(t),\ph_z(t))\big) \\ &  \leq \dissup_{t\geq 0}d_M(\ph_x(t),\ph_y(t))+ \dissup_{t\geq 0}d_M(\ph_y(t),\ph_z(t)) \\ & = \d_X(\ph_x,\ph_y) + \d_X(\ph_y,\ph_z).\end{array} $$
	It is immediate that $\d_X(\ph_x,\ph_y)=\d_X(\ph_y,\ph_x)\geq 0$, with---by existence and uniqueness---equality when and only when $x=y$.\end{proof}

\begin{remark}\label{remark:topologyOnIntegralCurves} The pseudo-metric $\d_X$ induces a topology on $\scM_X$ generated by base of open sets \begin{equation}\label{eq:openBalls} B_\e(\ph_x)\defeq \big\{\ph_y\in \scM_X:\, \d_X(\ph_x,\ph_y)<\e\big\},\end{equation} for $\e>0$ and $x\in M$. When $B_\e(\ph_x)=\{\ph_x\}$ for each $\ph_x\in \scM_X$, the topology on $\scM_X$ is discrete. 
\end{remark}

We use this topology on $\scM_X$ to define Lyapunov stability:

\begin{definition}\label{def:lyapunovStabilityContinuity}
	Let $x_e\in M$ be an equilibrium point (\cref{def:equilibria}).  The point $x_e$ is said to be \textit{Lyapunov stable} if the solution map $\ph_{X,(\cdot)}:M\rightarrow\scM_X$ is continuous at $x_e$, with respect to the topology on $\scM_X$ defined in \cref{remark:topologyOnIntegralCurves}. (Recall from \cref{fact:metrizability} that the topology on $M$ agrees with that induced by metric $d_M$.)  
\end{definition}

\begin{remark} 
	This definition, while not ubiquitous in the literature (\cite{hespanha} provides one example of its existence), captures the notion that a solution which starts close to a stable equilibrium will remain nearby for all (positive) time.  A more  standard but equivalent definition of Lyapunov stability uses the $\d$-$\e$ criterion:   for any $\e>0$ there is a $\d_\e>0$ such that $\d_X\big(\ph_{X,x_e},\ph_{X,x_o}\big)<\e$ whenever $d_M(x_e,x_o)< \d_\e$, or even more explicitly: $$d_M(\ph_{X,x_e}(t),\ph_{X,x_o}(t))<\e\; \mbox{for all }t\geq0  \mbox{ whenever}\; d_M(x_e,x_o)<\d_\e.$$
\end{remark}
In fact, there is nothing sacrosanct about equilibria points in this definition:
\begin{definition}\label{def:stabilityContinuity}  We say that a point $x_o\in M $ is  \textit{stable} if the solution map $\ph_{X,(\cdot)}:M\rightarrow\scM_X$ is continuous at $x_o$, and that \textit{system} $(M,X)$ is stable when $\ph_{X}$ is stable for all $x_o\in M$. 
\end{definition}
\begin{remark}\label{remark:upshotsOfGeneralStability}
	There are two advantages of this definition.  First, it may apply to any arbitrary point (and therefore integral curve) of a dynamical system $(M,X)$.  For example, every point of dynamical system $(\R,\dx = -x)$ is stable in the sense of \cref{def:stabilityContinuity}.  Secondly, stability is not restricted to bounded solutions.  For example, every point of $(\R,\dx = 1)$ is stable, even though the solution $\ph_{X,x_0}(t) = x_0+t$ is unbounded.  Yet, in both cases, stability still captures a notion we care about: solutions which start close to each other remain close.   
\end{remark}

While stability makes sense and captures reasonable intuition for unbounded trajectories, we isolate our attention to bounded ones. We will rely on the following notion of boundedness for  \cref{prop:well-definedTopology} and \cref{lemma:pushForwardContinuousAtBounded}. 
 \begin{definition}\label{def:boundedPoints}
 	Let $(M,X)$ be a dynamical system and $x_0\in M$ a point.  We say that $x_0$ is \textit{bounded} if the trajectory defined by the solution $\ph_{X,x_0}$ is \textit{totally bounded} on $\R^{\geq0}$, i.e.\ if the set $\big\{\ph_{X,x_0}(t):\, t\geq 0\big\}$ is totally bounded  in $(M,d_M)$ (\cref{fact:metrizability}). We say that $\ph_{x_0}\in \scM_X$ is \textit{bounded} if $x_0$ is bounded. 
 \end{definition}
 \begin{remark}\label{remark:total-bounded} Total boundedness allows us to conclude that the closure $\el_{\ph_{x_0}}\defeq \overline{\{\ph_{X,x_0}(t):\, t\geq 0\}}$ is compact.  When $M = \R^n$, this condition distills simply as saying that the set $\ph_{x_0}\left(\R^{\geq0}\right)$ is bounded.  Observe that it may be possible for a bounded $\ph$ (as in \cref{def:boundedPoints}) to have unbounded image $\ph(\R)$ when considering the full domain.  This oddity justifies for us also using (just) `bounded' for $\ph$ to mean that $\ph(\R^{\geq0})$ is totally bounded.     \end{remark}

	 We would like to ensure that the topology $\scM_X$ does not depend on choice of metric $d_M$ from \cref{fact:metrizability}.  In other words, given metrics $d_M$, $d_M'$ both recovering the original topology of $M$, the pseudo-metrics $\d_X$ and $\d_X'$ (\cref{remark:topologyOnIntegralCurves}) induce the same topology on $\scM_X$. (Here, $\d'_X(\ph,\psi)\defeq \dissup_{t\geq 0 }d_M'(\ph(t),\psi(t))$.) We will prove and use  the following slightly weaker claim in \cref{sec:openMaps}: 
\begin{prop}\label{prop:well-definedTopology}
Let $d_M$ and $d_M'$ be two metrics on $M$ defining the same topology, and suppose that $\ph_x\in \scM_X$ is a bounded point (\cref{def:boundedPoints}). Then the induced pseudo-metrics $\d_X$ and $\d_X'$ (\cref{lemma:inducedMetric})  define equivalent local bases in $\scM_X$ at $\ph_x$.  
\end{prop}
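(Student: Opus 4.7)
The plan is to reduce equivalence of local bases at $\ph_x$ to uniform equivalence of $d_M$ and $d_M'$ on a compact neighborhood of the trajectory closure $\el_{\ph_x}$, and then lift that uniform equivalence up to the curve level. First I would invoke the boundedness hypothesis (see \cref{remark:total-bounded}) to conclude that $\el_{\ph_x}$ is compact in the common topology of $d_M$ and $d_M'$. Since $M$, as a second countable Hausdorff manifold, is locally compact, I can cover $\el_{\ph_x}$ by finitely many relatively compact open sets, producing an open $U$ with $K \defeq \overline{U}$ compact and $\el_{\ph_x} \subseteq U \subseteq K$.

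The key technical step is to show that some closed $d_M'$-tube about $\el_{\ph_x}$ sits inside $K$. Concretely, I claim there exists $\d_0 > 0$ such that $V' \defeq \{p \in M : d_M'(p, \el_{\ph_x}) \leq \d_0\} \subseteq U$, in which case $V'$ is compact as a closed subset of $K$. The proof is a standard contradiction argument: if no such $\d_0$ existed, then for each $n$ one would find $p_n \in M \setminus U$ with $d_M'(p_n, \el_{\ph_x}) \leq 1/n$; by compactness of $\el_{\ph_x}$ one extracts $q_n \in \el_{\ph_x}$ with $q_n \to q \in \el_{\ph_x}$ along a subsequence, forcing $p_n \to q$, and since $q \in U$ while $M \setminus U$ is closed, one reaches a contradiction. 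Once $V'$ is known to be compact, the identity map $(V', d_M') \to (V', d_M)$ is continuous (both metrics induce the same subspace topology on $V'$), hence uniformly continuous; so, given $\e > 0$, there is $\d_1 \in (0, \d_0]$ such that $p, q \in V'$ with $d_M'(p, q) < \d_1$ implies $d_M(p, q) < \e$.

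To finish, I lift this to curves: suppose $\d_X'(\ph_x, \ph_y) < \d_1$. Then for every $t \geq 0$, $d_M'(\ph_x(t), \ph_y(t)) < \d_1 \leq \d_0$, and since $\ph_x(t) \in \el_{\ph_x}$ this places $\ph_y(t)$ inside $V'$; uniform continuity on $V'$ then gives $d_M(\ph_x(t), \ph_y(t)) < \e$ uniformly in $t$, whence $\d_X(\ph_x, \ph_y) \leq \e$. Applying this with $\e/2$ in place of $\e$ yields $B'_{\d_1}(\ph_x) \subseteq B_{\e}(\ph_x)$; interchanging the roles of $d_M$ and $d_M'$ gives the reverse inclusion, and together these establish equivalence of the local bases at $\ph_x$. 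The main obstacle is a chicken-and-egg issue: applying uniform continuity requires $\ph_y(t)$ to already lie in the compact set $V'$, but this confinement itself must be secured from $d_M'$-closeness; the two-radius strategy ($\d_1 \leq \d_0$) resolves it by using $\d_0$ to pin the trajectory into $V'$ before invoking the uniform bound.
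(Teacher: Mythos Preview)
Your proof is correct and follows a genuinely different route from the paper's. The paper applies \cref{lemma:continuousDelta} to the identity map $(M,d_M')\to(M,d_M)$ to obtain a \emph{continuous} function $\e':M\to\R^{>0}$ satisfying $d_M'(x,x')<\e'(x)\Rightarrow d_M(x,x')<\e$, and then sets $\e'_*\defeq\inf_{t\ge 0}\e'(\ph_x(t))$, which is strictly positive by compactness of $\el_{\ph_x}$ and continuity of $\e'$; the inclusion $B'_{\e'_*}(\ph_x)\subset B_\e(\ph_x)$ is then immediate. You instead thicken $\el_{\ph_x}$ to a compact $d_M'$-tube $V'$ and invoke uniform continuity of the identity on $V'$ (Heine--Cantor) to produce a single $\d_1$. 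The paper's approach is streamlined in that no auxiliary neighborhood is constructed, and it is deliberately parallel to its proof of \cref{lemma:pushForwardContinuousAtBounded}, so that the two arguments share a common template. Your approach, on the other hand, is more self-contained and elementary: it needs only local compactness and the Heine--Cantor theorem, bypassing the partition-of-unity machinery behind \cref{lemma:continuousDelta}, and it makes the confinement step (forcing $\ph_y(t)\in V'$ before the uniform estimate can be applied) explicit, whereas the paper sidesteps that issue because its modulus $\e'$ is defined globally on $M$.
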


\section{Open Maps Preserve Stability}\label{sec:openMaps}
 Recall that the maps of dynamical systems preserve integral curves (\cref{def:integralCurveAsMap}).  They also preserver equilibria.  Indeed, let $f:(M,X)\rightarrow (N,X)$ be a map of dynamical systems and $x_e\in M$ an equilibrium point.  Then  linearity of the differential implies that $Y(f(x_e))=Tf X(x_e)=Tf (0) =0$.   Alternatively, since maps of dynamical systems send  integral curves   to integral curves, $f_*\ph_{X,x_e}$ is a constant curve, so $0=\frac{d}{dt} f_*\ph_{X,x_e}(t)  = Y (\ph_{Y,f(x_e)}(t))$.

 Under some conditions on the map of systems, stability is also preserved.  Our main result, which we now state, provides such conditions. 
  \begin{theorem}\label{prop:pushStableToStable}\label{theorem:pushStableToStable}
  Let $(M,X)\xrightarrow{f}(N,Y)$ be a map of dynamical systems for which $f$ is open: $f(\Oc)$ is an open set in $N$ whenever $\Oc$ is open in $M$.  Suppose, further, that  $x_0\in M$ is stable and bounded (\cref{def:stabilityContinuity}, \cref{def:boundedPoints}).  Then $f(x_0)$ is stable. 	
  \end{theorem}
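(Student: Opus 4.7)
The plan is to establish continuity of $\ph_{Y,(\cdot)}:N\to\scM_Y$ at $f(x_0)$ by transferring continuity of $\ph_{X,(\cdot)}$ at $x_0$ across $f$. The preparatory fact is that since $f$ is a morphism of dynamical systems (\cref{def:ctDySysMorphism}), the curve $f\circ\ph_{X,x}$ is an integral curve of $Y$ through $f(x)$, so by uniqueness (\cref{theorem:E&U}), $\ph_{Y,f(x)}=f\circ\ph_{X,x}$ for every $x\in M$. Moreover, boundedness of $x_0$ (\cref{def:boundedPoints},~\cref{remark:total-bounded}) makes $K\defeq\overline{\{\ph_{X,x_0}(t):t\geq 0\}}$ compact.

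Fix $\e>0$. The core analytic step is a tube estimate promoting pointwise continuity of $f$ on $K$ to uniform continuity in an $r$-neighborhood of $K$: there exists $r>0$ such that whenever $p\in K$ and $d_M(x,p)<r$, one has $d_N(f(x),f(p))<\e$. To produce $r$, for each $p\in K$ pick $r_p>0$ with $f(B_{r_p}(p))\subseteq B_{\e/2}(f(p))$ by continuity, extract a finite subcover $\{B_{r_{p_i}/2}(p_i)\}_{i=1}^n$ of $K$, and set $r\defeq\min_i r_{p_i}/2$; a triangle-inequality chase then yields the claim.

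Now invoke stability of $x_0$ (\cref{def:stabilityContinuity}) to obtain an open neighborhood $V$ of $x_0$ with $\d_X(\ph_{X,x_0},\ph_{X,x'})<r$ for every $x'\in V$. Because $f$ is open, $f(V)$ is an open neighborhood of $f(x_0)$ in $N$, so some metric ball $B_\d(f(x_0))$ lies inside $f(V)$. For any $y_0\in B_\d(f(x_0))$, pick $x'\in V$ with $f(x')=y_0$; then $\ph_{Y,y_0}=f\circ\ph_{X,x'}$ and $\ph_{Y,f(x_0)}=f\circ\ph_{X,x_0}$. For each $t\geq 0$, the choice of $V$ gives $d_M(\ph_{X,x_0}(t),\ph_{X,x'}(t))<r$, and since $\ph_{X,x_0}(t)\in K$, the tube estimate yields $d_N(\ph_{Y,f(x_0)}(t),\ph_{Y,y_0}(t))<\e$. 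Taking $\sup_{t\geq 0}$ gives $\d_Y(\ph_{Y,f(x_0)},\ph_{Y,y_0})\leq\e$, which (since $\e$ was arbitrary) is continuity of $\ph_{Y,(\cdot)}$ at $f(x_0)$.

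The main obstacle is the tube estimate: pointwise continuity of $f$ does not directly yield uniform behavior along an infinite trajectory, which is precisely why the boundedness hypothesis is essential---it compactifies the relevant portion of the trajectory so that a Lebesgue-number-style covering argument applies. Openness of $f$, by contrast, carries no analytic weight; it is exactly what allows us to pull a small $N$-ball around $f(x_0)$ back into the preselected neighborhood $V$ of $x_0$ without inverting $f$ outright.
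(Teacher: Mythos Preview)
Your proof is correct and follows the same architecture as the paper's: both exploit the commuting square $\ph_Y\circ f=f_*\circ\ph_X$, show that $f_*:\scM_X\to\scM_Y$ is continuous at the bounded point $\ph_{X,x_0}$, and then use openness of $f$ to push the stability neighborhood forward. The difference lies in how continuity of $f_*$ is obtained. The paper isolates this as \cref{lemma:pushForwardContinuousAtBounded}, which in turn rests on \cref{lemma:continuousDelta}: a partition-of-unity argument producing a globally \emph{continuous} function $\d_\e:M\to\R^{>0}$ witnessing the modulus of continuity of $f$; one then takes the minimum of $\d_\e$ over the compact trajectory closure. Your tube estimate bypasses this machinery entirely with a direct Lebesgue-number argument on the compact set $K$, which is more elementary and perfectly adequate here. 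The paper's route buys a reusable standalone lemma (\cref{lemma:continuousDelta}) of some independent interest and is also used to prove \cref{prop:well-definedTopology}; your route is shorter and self-contained for the theorem at hand.
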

 
The proof of this theorem requires two lemmas, interesting in their own right. 
  \begin{lemma}\label{lemma:pushForwardContinuousAtBounded}
  	Let $f:(M,X)\rightarrow (N,Y)$ be a map of systems.  Then the pushforward $$\begin{array}{rrl} f_*: & \scM_X & \rightarrow\scM_Y\\ & \ph_{X,x} & \mapsto f\circ \ph_{X,x} = \ph_{Y,f(x)}\end{array}$$  is continuous at bounded points $\ph_{x_0}$ of $\scM_X$ (\cref{def:boundedPoints}). 
  \end{lemma}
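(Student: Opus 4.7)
The plan is to unwind continuity at $\ph_{x_0}$ into its quantitative $(\eta,\varepsilon)$ form using the pseudo-metrics of \cref{lemma:inducedMetric}: given $\varepsilon>0$, I need $\eta>0$ so that
\[
\d_X(\ph_{x_0},\ph_x)<\eta \quad\Longrightarrow\quad \d_Y(f_*\ph_{x_0},f_*\ph_x)<\varepsilon,
\]
i.e.\ so that $\sup_{t\geq 0} d_N\bigl(f(\ph_{x_0}(t)),f(\ph_x(t))\bigr)<\varepsilon$ whenever $\sup_{t\geq 0}d_M(\ph_{x_0}(t),\ph_x(t))<\eta$. The right-hand condition is pointwise bounded by $\eta$ for each $t$, so the whole task reduces to extracting a uniform continuity statement for $f$ along the forward trajectory of $\ph_{x_0}$.

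First I would invoke the boundedness hypothesis: by \cref{def:boundedPoints} and \cref{remark:total-bounded}, the closure $\el_{\ph_{x_0}}=\overline{\{\ph_{X,x_0}(t):t\geq 0\}}$ is compact in $(M,d_M)$. Next, I would upgrade ordinary continuity of $f$ to a tubular uniform continuity around this compact set: for every $\varepsilon>0$ there exists $\eta>0$ such that whenever $p\in \el_{\ph_{x_0}}$ and $d_M(p,q)<\eta$, one has $d_N(f(p),f(q))<\varepsilon$. This is the step where boundedness really bites, and it is the only genuinely nontrivial ingredient; I expect it to be the main obstacle, though it is standard. I would prove it by picking, for each $p\in \el_{\ph_{x_0}}$, some $\eta_p>0$ with $f\bigl(B_{\eta_p}(p)\bigr)\subseteq B_{\varepsilon/2}(f(p))$ using continuity of $f$ at $p$; covering $\el_{\ph_{x_0}}$ by the open balls $B_{\eta_p/2}(p)$, extracting a finite subcover $B_{\eta_{p_i}/2}(p_i)$ by compactness, and setting $\eta\defeq \tfrac12\min_i \eta_{p_i}$. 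A two-step triangle inequality on $f$, using the fact that any $p\in \el_{\ph_{x_0}}$ sits in some $B_{\eta_{p_i}/2}(p_i)$, then yields the advertised uniformity.

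With this $\eta$ in hand, the proof finishes mechanically. Suppose $\d_X(\ph_{x_0},\ph_x)<\eta$. Then for each $t\geq 0$ we have $\ph_{x_0}(t)\in\el_{\ph_{x_0}}$ and $d_M(\ph_{x_0}(t),\ph_x(t))<\eta$, so the tubular uniform continuity gives
\[
d_N\bigl(f(\ph_{x_0}(t)),f(\ph_x(t))\bigr)<\varepsilon.
\]
Taking the supremum over $t\geq 0$ yields $\d_Y(f_*\ph_{x_0},f_*\ph_x)\leq \varepsilon$, and running the argument initially with $\varepsilon/2$ in place of $\varepsilon$ sharpens this to the strict inequality required. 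Since $f_*\ph_{X,x}=\ph_{Y,f(x)}$ by \cref{def:ctDySysMorphism} (the pushforward of an integral curve is an integral curve with shifted initial condition), this establishes continuity of $f_*$ at $\ph_{x_0}$ in the topology of \cref{remark:topologyOnIntegralCurves}, as desired.
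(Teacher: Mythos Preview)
Your argument is correct. Both your proof and the paper's hinge on the same insight---compactness of $\el_{\ph_{x_0}}$ upgrades pointwise continuity of $f$ to a uniform modulus along the forward trajectory---but the implementations differ. The paper first proves a standalone lemma (\cref{lemma:continuousDelta}) that, for any continuous $f:M\to N$ and any $\varepsilon>0$, produces a \emph{continuous} function $\delta_\varepsilon:M\to\R^{>0}$ realizing the $\delta$--$\varepsilon$ condition everywhere; the proof uses a locally finite cover and a smooth partition of unity. Continuity of $\delta_\varepsilon$ then lets one take a positive minimum over the compact set $\el_{\ph_{x_0}}$ to obtain the required uniform $\hat\delta$. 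Your route bypasses this auxiliary lemma entirely: you extract tubular uniform continuity directly from a finite subcover of $\el_{\ph_{x_0}}$ and a two-step triangle inequality, which is more elementary (no partitions of unity) and fully self-contained. The paper's approach, on the other hand, is more modular: the same \cref{lemma:continuousDelta} is reused verbatim in the proof of \cref{prop:well-definedTopology}, so packaging it once pays off twice.
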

  
The second one is needed  to prove \cref{lemma:pushForwardContinuousAtBounded}: 
\begin{lemma}\label{lemma:continuousDelta}
	Let $f:M\rightarrow N$ be a continuous map between manifolds and fix $\e>0$.  Then there is \textit{continuous} function $\d_\e:M\rightarrow\R^{>0}$ such that $d_M(x,x')<\d_\e(x)$ implies that $d_N(f(x),f(x'))< \e$.  
\end{lemma}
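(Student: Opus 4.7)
My plan is a standard partition-of-unity argument that converts the pointwise existence of a $\delta$ (from continuity of $f$) into a globally continuous choice.

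\textbf{Step 1 (pointwise $\delta$, with a factor of two buffer).} For each $x \in M$, continuity of $f$ at $x$ gives some $r(x) > 0$ with the property that
$$d_M(x, x') \leq 2 r(x) \;\Longrightarrow\; d_N\!\big(f(x), f(x')\big) < \e/2.$$
The factor of $2$ is crucial: it lets me compare two points $x$, $x'$ through a common ``center'' using the triangle inequality.

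\textbf{Step 2 (partition of unity).} The open cover $\{B_{r(x)}(x)\}_{x \in M}$ of $M$ admits a locally finite open refinement $\{U_\alpha\}_{\alpha \in A}$ (since $M$ is a second countable, hence paracompact, manifold) together with a subordinate continuous partition of unity $\{\rho_\alpha\}_{\alpha \in A}$. For each $\alpha$, fix a center $x_\alpha \in M$ with $U_\alpha \subseteq B_{r(x_\alpha)}(x_\alpha)$, and set $r_\alpha \defeq r(x_\alpha)$.

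\textbf{Step 3 (construct $\delta_\e$).} Define
$$\delta_\e(x) \;\defeq\; \sum_{\alpha \in A} \rho_\alpha(x)\, r_\alpha.$$
Local finiteness makes this a finite sum in a neighborhood of each point, so $\delta_\e$ is continuous; since $\sum_\alpha \rho_\alpha \equiv 1$ with each $r_\alpha > 0$, it is strictly positive.

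\textbf{Step 4 (verification).} Given $x \in M$, pick any index $\alpha_0$ with $\rho_{\alpha_0}(x) > 0$ that maximizes $r_\alpha$ over the (finitely many) active indices at $x$. Then $\delta_\e(x) \leq r_{\alpha_0}$ as a convex combination bounded by its maximum, and $x \in U_{\alpha_0} \subseteq B_{r_{\alpha_0}}(x_{\alpha_0})$. If $d_M(x, x') < \delta_\e(x)$, then
$$d_M(x_{\alpha_0}, x') \leq d_M(x_{\alpha_0}, x) + d_M(x, x') < r_{\alpha_0} + r_{\alpha_0} = 2 r_{\alpha_0},$$
and also $d_M(x_{\alpha_0}, x) < r_{\alpha_0} \leq 2 r_{\alpha_0}$. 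By Step 1 applied at the center $x_{\alpha_0}$, both $d_N(f(x_{\alpha_0}), f(x)) < \e/2$ and $d_N(f(x_{\alpha_0}), f(x')) < \e/2$, so the triangle inequality in $N$ yields $d_N(f(x), f(x')) < \e$.

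The main (small) obstacle is getting the constants to line up: naively using $r(x)$ with $d_M(x, x') < r(x) \Rightarrow d_N(f(x), f(x')) < \e$ fails, because the comparison has to be routed through a nearby center $x_{\alpha_0}$ rather than $x$ itself. Replacing $r(x)$ by ``a radius that works out to distance $2r(x)$ with tolerance $\e/2$'' in Step 1 provides exactly the slack needed in Step 4.
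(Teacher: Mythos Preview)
Your proof is correct and follows essentially the same partition-of-unity argument as the paper: a pointwise $\delta$ with an $\e/2$ tolerance, a locally finite cover by half-radius balls (your $r(x)$ is the paper's $\tilde{\delta}(x)/2$), a convex-combination definition of $\delta_\e$, and routing the estimate through the center with the largest active radius. The only cosmetic difference is that you pass through a locally finite \emph{refinement} $\{U_\alpha\}$ rather than directly extracting a locally finite subcover by balls, which is arguably a bit more careful.
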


We call attention to our dual use of `$\d$' as both a metric on $\scM_X$ and as a function $M\xrightarrow{\d} \R^{\geq0}$.  In this proof, $\d$ and all its variants only refer to the latter function. 

\begin{proof}
	Since $f$ is continuous, there is \textit{a} function \begin{equation}\label{eq:tildeDelta} \tilde{\d}:M\rightarrow\R^{>0}\end{equation} (not necessarily continuous) such  that $d_M(x,x')<\tilde{\d}(x)$ implies that $d_N(f(x),f(x'))<\e/2$.  Let  $B_\d(x_0)\defeq \big\{x\in M:\, d_M(x_0,x)<\d\big\}$ and set \begin{equation}\label{eq:openCover}\scB \defeq \left\{B_{\tilde{\d}(x_0)/2}(x_0):\, x_0\in M_0\right\}\end{equation} to be a locally finite open cover of $M$,  with  $M_0\subset M$. Then there  is smooth partition of unity $\big\{\r_{x_0}:M\rightarrow[0,1]	:\, x_0\in M_0\big\}$  subordinate to $\scB$ (see, e.g., \cite[Theorem 13.7]{tu}). 
	
	We  define function $\d_\e:M\rightarrow\R^{>0}$ by \begin{equation}\label{eq:DefDelta} \d_\e(\cdot) \defeq \frac{1}{2}\diss_{x_0\in M_0}\tilde{\d}(x_0)\r_{x_0}(\cdot),\end{equation}  which is smooth, and therefore continuous,  since each $\r_{x_0}(\cdot)$ is. We must  verify that this $\d_\e$  satisfies the delta-epsilon constraint, namely that $d_N(f(x),f(x'))<\e$ whenever $d_M(x,x')<\d_\e(x)$.
	
	Let $x\in M$ and set \begin{equation}\label{eq:maxBall}\begin{array}{ll} \ox& \defeq \displaystyle\arg\max_{x_0\in M_0}\left\{\tilde{\d}(x_0):\, \r_{x_0}(x)\neq 0\right\},\\\overline{\d}& \defeq \tilde{\d}(\ox)= \dismax_{x_0\in M_0}\big\{\tilde{\d}(x_0):\, \r_{x_0}(x)\neq 0\big\}.\end{array}\end{equation} By definition, $x\in B_{\overline{\d }/2}(\ox)$  (for pictorial reference, consider \cref{fig:fig}) and   $\d_\e(x)\leq \frac{1}{2}\overline{\d}$ (\cref{eq:DefDelta}). 
	
	Now, suppose   that $d_M(x,x')<\d_\e(x)$. We observe, first, that $$d_N(f(x),f(x'))\leq d_N(f(x),f(\ox)) + d_N(f(\ox),f(x'))$$ because $d_N$ is a metric. We bound each term by $\e/2$.  On $M$, \begin{equation}\label{eq:epsilonWhatevs} d_M(x,x')\leq  2d_M(x,\ox) + d_M(x,x')= d_M(x,\ox) + \big(d_M(\ox,x)+d_M(x,x')\big).\end{equation}   Since $\rho_{\ox} (x)\neq 0$ and $\mbox{supp}(\rho_{\ox} )\subseteq B_{\overline{\d}/2}(\ox)$, we conclude that $d_M(x,\ox)<\frac{1}{2}\overline{\d} = \frac{1}{2}\tilde{\d}(\ox)<\tilde{\d}(\ox)$ which implies (\cref{eq:openCover}, \eqref{eq:maxBall}) that \begin{equation}\label{eq:epsilon1} d_N(f(x),f(\ox))<\e/2.\end{equation}  Similarly,  $\big( d_M(\ox,x)+d_M(x,x')\big)< \frac{1}{2}\overline{\d}+  \frac{1}{2}\overline{\d} = \overline{\d} = \tilde{\d}(\ox)$  which implies, since $d_M(\ox,x')\leq d_M(\ox,x)+d_M(x,x')$, that \begin{equation}\label{eq:epsilon2} d_N(f(\ox),f(x'))< \e/2.\end{equation}  Together, inequalities eq.\ \eqref{eq:epsilon1} and eq.\ \eqref{eq:epsilon2}  imply that $$d_N(f(x),f(x')) < \e,$$ as desired. \end{proof}
\begin{figure}[h!]
\centerline{\includegraphics[scale = 0.4]{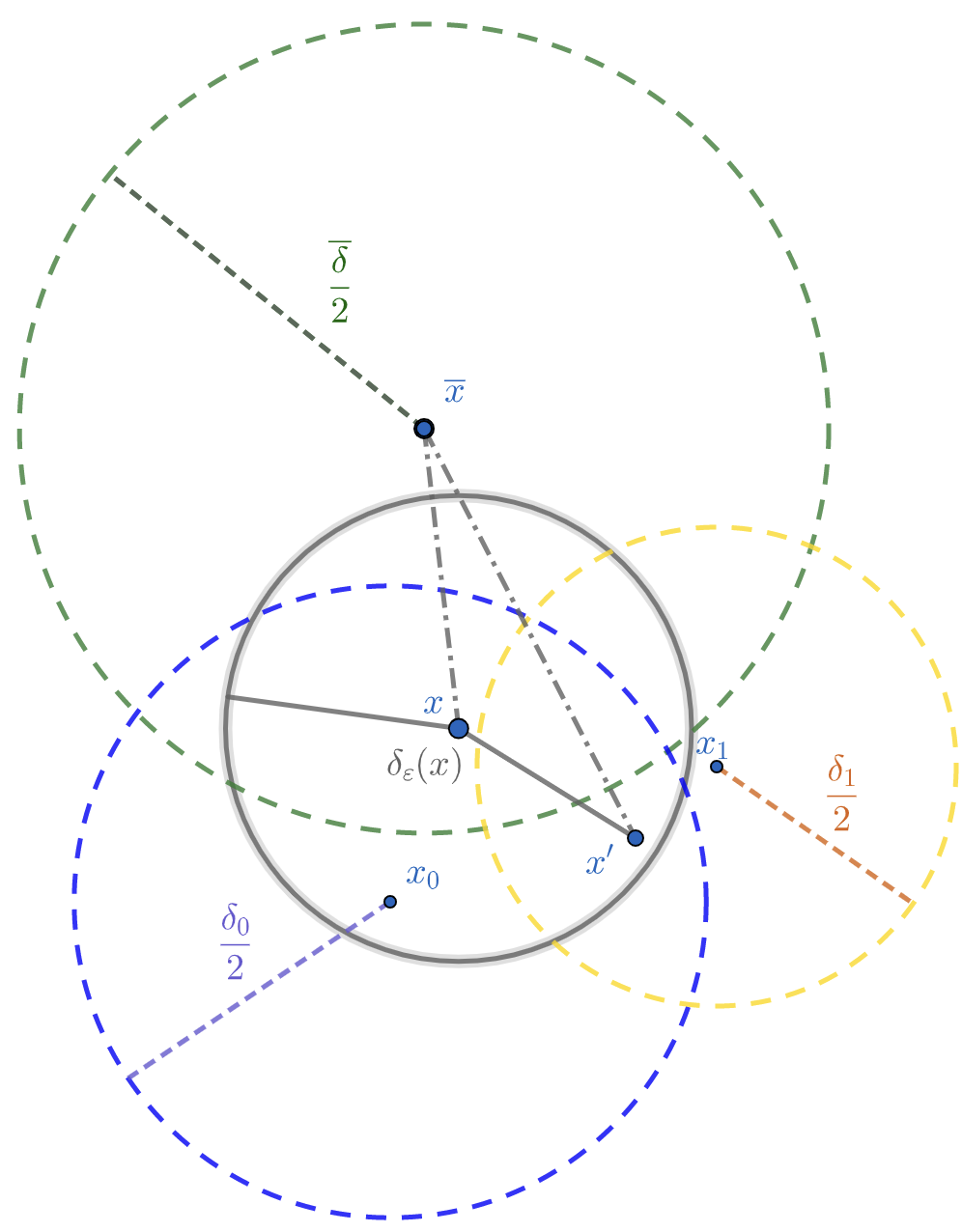}}\caption{Construction of $\d_\e(\cdot)$}\label{fig:fig}	
\end{figure}

Before continuing with the proof of \cref{lemma:pushForwardContinuousAtBounded}, we establish that  local base of topology of $\scM_X$ is well-defined at bounded points (\cref{def:boundedPoints}). (In this proof, we revert to using $\d$ as a pseudo-metric on $\scM_X$.) 
 \begin{proof}[Proof of \cref{prop:well-definedTopology}]
	Fix  bounded $\ph_x\in \scM_X$,  $\e>0$, and open ball $B_\e(\ph_x)\subset \scM_X$ \eqref{eq:openBalls}. We show that there is open ball $B_{\e_*'}'(\ph_x)=\big\{\ph\in \scM_X:\, \d_X'(\ph_x,\ph)<\e_*'\big\}\subset B_\e(\ph_x)$ for some $\e_*'>0$. Because $d_M$ and $d_M'$ define the same topology on $M$, the identity map $(M,d_M')\xrightarrow{id_M}(M,d_M)$ is continuous, and \cref{lemma:continuousDelta} implies, therefore, that there is continuous $\e':M\rightarrow\R^{>0}$ for which $d_M(x,x')<\e$ whenever $d_M'(x,x')<\e'(x)$. Since $\ph_x$ is bounded, the closure $\el_\ph\defeq\overline{\ph_x(\R^{\geq0})}$ is compact (\cref{remark:total-bounded}), so $x_*\defeq\disl_{t\rightarrow\infty}\ph_x(t) \in \el_\ph$, and $\disl_{t\rightarrow\infty}\e'(\ph(t)) = \e'(x_*) > 0$. Therefore, continuity of $\e'$, compactness of $\el_\ph$, and  the inclusion $\e'(\el_\ph)\subset\R^{>0}$ imply that  $\e'_*\defeq\disinf_{t\geq 0}\e'(t)>0$. It readily follows that $B_{\e'_*}'(\ph_x) \subset B_\e(\ph_x)$.  \end{proof}
  
  The proof of \cref{lemma:pushForwardContinuousAtBounded} is similar.   \begin{proof}[Proof of \cref{lemma:pushForwardContinuousAtBounded}]  	 Fix $\e>0$ and let $\ph\in \scM_X$ be bounded (\cref{def:boundedPoints}). We  show that there is $\hat{\d}>0$ for which $f_*\left(B_{\hat{\d}}(\ph)\right)\subseteq B_{\e }(f_*\ph)$.  Let $\d_{\e}:M\rightarrow \R^{>0}$ be a continuous function  satisfying delta-epsilon condition for $\e $ (\cref{lemma:continuousDelta}), so that $$d_N\left(f\big(\ph(t)\big),f(x)\right)<\e $$ whenever   $$d_M(\ph(t),x)<\d_{\e }(\ph(t)).$$  Because  $\Lc_\ph\defeq  \overline{\left\{\ph(t):\, t\geq 0 \right\}}$ is compact (\cref{remark:total-bounded}),  $$x_*\defeq \disl_{t\rightarrow\infty}\ph(t) \in \el_\ph\subset M$$ and because  $\d_{\e }(\cdot)$ is continuous, $$\disl_{t\rightarrow\infty}\d_{\e }(\ph(t)) = \d_{\e }\left(x_*\right) > 0.$$ Compactness of $\el_\ph$ and continuity of $\d_{\e}$ imply that the minimum $\hat{\d}\defeq \dismin\big\{\overline{\d}\in \d_{\e }(\el_\ph)\big\}$ is obtained, and because $\d_{\e }(\el_\ph)\subset\R^{>0}$, $\hat{\d}>0$. We thus conclude that $$f_*(B_{\hat{\d}}(\ph))\subseteq B_{\e }(f_*\ph), $$ 	as required
  \end{proof}

    Finally, the proof of \cref{theorem:pushStableToStable}:
  \begin{proof}[Proof of \cref{prop:pushStableToStable}]
	Suppose that $x_0\in M$ is stable and bounded.  To show that $f(x_0)\in M$ is stable, we must show that the solution map $\ph_Y:N\rightarrow \scM_Y$ is continuous at $f(x_0)$. Let $\Oc\subseteq \scM_Y$ be open containing $\ph_{Y,f(x_0)}$ and consider the commutative diagram $$\begin{tikzcd}[column sep = large, row sep= large]
	M\arrow[r,"\ph_X"]\arrow[d,"f"] & \scM_X \arrow[d,"f_*"]\\ N\ar[r,"\ph_Y"] & \scM_Y.
	\end{tikzcd}$$  
Since $\ph_Y\circ f = f_*\circ \ph_X$, we have that $f^{-1}\circ \ph_Y^{-1} (\Oc) = \ph_X^{-1}\circ f_*^{-1}(\Oc)$ and therefore $$\ph_Y^{-1}(\Oc)\supseteq f \left(f^{-1}\big(\ph_Y^{-1}(\Oc)\big)\right)  = f\left(\ph_X^{-1}\big(f_*^{-1}(\Oc)\big)\right),$$ which is open because $f_*$ is continuous (\cref{lemma:pushForwardContinuousAtBounded}), $\ph_X$ is continuous at $x_0$ by assumption (\cref{def:stabilityContinuity}), and $f$ is open. \end{proof}

  \begin{example}
  	Consider the nonlinear dynamical system $(\R,\dx = -x^3)$ and map of systems $$\begin{tikzcd}
\R^{>0}\arrow[d,"-x"]\arrow[r,"f"] & \R \arrow[d,"-x^3"]\\
T\R^{>0}\arrow[r, "Tf"] & T\R, 	
\end{tikzcd}$$ where  $$f(x) = \frac{1}{\sqrt{\log\left(\frac{1}{x^2}\right)+1}}.$$ 

As  observed previously, $(\R,\dx = -x)$ is stable and $f$ is open at $x=1$. Therefore  \cref{prop:pushStableToStable} implies that $f(1)=1$ is stable in $(\R,\dx = -x^3)$.  \end{example}

  \begin{example}
  	Consider  system $\dx = 1$, whose solution is given by $x(t) = x_0 + t$.  While obviously stable  (\cref{remark:upshotsOfGeneralStability}), we observe this fact as a result of \cref{theorem:pushStableToStable}.  Consider map of systems $$\begin{tikzcd}
	 \R^{>0}\arrow[r,"-\log(x)"]\arrow[d,"-x"] & \R\arrow[d,"1"]\\
T\R^{>0}\arrow[r,"-\frac{1}{x}"] & T\R,  
\end{tikzcd}$$ which is an open map.  Since $\dx = -x$ is stable, we conclude that $\dx = 1$ is as well.  

  \end{example}

\addcontentsline{toc}{chapter}{Bibliography}
\bibliographystyle{plain}
\bibliography{phdref}

\end{document}